\theoremstyle{thmstyleone}%
\newtheorem{theorem}{Theorem}
\newtheorem{proposition}[theorem]{Proposition}%
\newtheorem{corollary}[theorem]{Corollary}%
\newtheorem{lemma}[theorem]{Lemma}%
\theoremstyle{thmstyletwo}%
\newtheorem{example}{Example}%
\newtheorem{remark}{Remark}%
\theoremstyle{thmstylethree}%
\newtheorem{definition}{Definition}%
\begin{document}

\title[Interior Dynamics of Fatou Sets]{Interior Dynamics of Fatou Sets}


\author{\fnm{Mi} \sur{Hu}}\email{mi.hu@unipr.it}



\affil{\orgdiv{Department of Mathematical, Physical and Computer Sciences}, \orgname{University of Parma}, \orgaddress{\street{Parco Area delle Scienze}, \city{Parma}, \postcode{43124},  \country{ Italy}}}




\abstract{In this paper, we investigate the precise behavior of orbits inside attracting basins. Let $f$ be a holomorphic polynomial of degree $m\geq2$ in $\mathbb{C}$, $\mathcal {A}(p)$ be the basin of attraction of an attracting fixed point $p$ of $f$, and $\Omega_i (i=1, 2, \cdots)$ be the connected components of $\mathcal{A}(p)$. We prove that there is a constant $C$ so that for every point $z_0$ inside any $\Omega_i$, there exists a point $q\in \cup_k f^{-k}(p)$ inside $\Omega_i$ such that $d_{\Omega_i}(z_0, q)\leq C$, where $d_{\Omega_i}$ is the Kobayashi distance on $\Omega_i.$}

\keywords{Basin of attraction, Parabolic basin, Blaschke products, Kobayashi metric}



\maketitle

\section{Introduction}\label{sec1}

	A general goal in discrete dynamical systems is to qualitatively and quantitatively describe the possible dynamical behaviour under iteration of maps satisfying certain conditions. They may be algebraic (e.g., polynomials or rational maps) or analytic (e.g., smooth, symplectic, or holomorphic self-maps).

Let $\hat{\mathbb{C}}=\mathbb{C}\cup\{\infty\}, f: \hat{\mathbb{C}} \rightarrow \hat{\mathbb{C}}$ be a nonconstant holomorphic map, and $f^{ n}: \hat{\mathbb{C}} \rightarrow \hat{\mathbb{C}}$ be its $n$-fold iterate. In complex dynamics, two crucial disjoint invariant sets are associated with $f$, the {\sl Julia set} and the {\sl Fatou set} \cite{RefM}, which partition the sphere $\hat{\mathbb{C}}$.
The Fatou set of $f$ is defined as the largest open set where the family of iterates is locally normal. In other words, for any point $z\in \hat{\mathbb{C}}$, there exists some neighborhood $U$ of $z$ so that the sequence of iterates of the map restricted to $U$ forms a normal family, so the iterates are well behaved. 
The complement of the Fatou set is called the Julia set. 
The connected components of the Fatou set of $f$ are called {\sl Fatou components}. 
A Fatou component $\Omega\subset \hat{\mathbb{C}}$ of $f$ is {\sl invariant} if $f(\Omega)=\Omega$. 
For $z\in \hat{\mathbb{C}}$, the set $\{z_n\}=\{z_1=f(z_0), z_2=f^2(z_0), \cdots\}$ is called the orbit of the point $z=z_0$. 
If $z_N=z_0$ for some integer $N$, we say that $z_0$ is a periodic point of $f$.
If $N=1$, then $z_0$ is a fixed point of $f.$ 

At the beginning of the $20$th century, Fatou \cite{RefM} classified all possible invariant Fatou components of rational functions on the Riemann sphere. He proved that only the following three cases can occur:
\begin{enumerate}
	\item (attracting case) $\Omega$ contains a fixed point $p$ and the orbit of every point in $\Omega$ converges to $p$. 
	\item (parabolic case) $b\Omega$ contains a fixed point $p$ and the orbit of every point in $\Omega$ converges to $p$.  
	\item (rotation domain) $\Omega$ is conformally equivalent to a unit disk or an annulus and the map is conjugate to an irrational rotation.
\end{enumerate}

The classification of Fatou components was completed in the '80s when Sullivan proved that every Fatou component of a rational map is preperiodic, i.e., there are $n,m \in \mathbb{N}$ such that $f^{n+m}(\Omega)=f^m(\Omega)$. For more details and results, we refer the reader to \cite{RefB}, \cite{RefCG}, and \cite{RefM}.

However, there is no detailed study until now of the more precise behavior of orbits inside the Fatou set. 
For example, let $\mathcal{A}(p):=\{z\in \mathbb{C}; f^n(z)\rightarrow p\}$ be the {\sl basin of attraction} of an attracting fixed point $p$. 
One can ask when $z_0$ is close to $\partial\mathcal{A}(p)$, what orbits $\{z_n\}$ of $z_0$ going from $z_0$ to near the attracting fixed point $p$ look like, or how many the iterations are. 
One application is arising from Newton's method in \cite{RefB}. 
It is of practical interest to know how many times Newton's method must be iterated to get the desired approximation of the root.

These kinds of questions are the main topics of this paper. In the second section, we study the dynamics of holomorphic polynomials on the attracting basin and give our main theorems. Our practical tool uses conjugation to consider the orbits of Blaschke products on the unit disk instead of considering the orbits of polynomials on the attracting basin.

\section{Dynamics of holomorphic polynomials inside an attracting basin}\label{sec2}

	Let $f: \hat{\mathbb{C}}\rightarrow\hat{\mathbb{C}}$ be a holomorphic map and
$\Omega$ be an invariant Fatou component. It follows from the
classification above in section \ref{sec1} that we have a complete understanding of the long-term behaviour of all orbits in $\Omega$, but how precisely the
iterates of a point move inside $\Omega$ is presently still unknown.
This section will more precisely describe how iterates of a
point move inside an invariant Fatou component and the whole basin of attraction. Then we give our main results in Theorem \ref{theom1} and Theorem \ref{the3}. 

\subsection{The Kobayashi metric}\label{subsec2}
	\begin{definition}\label{def1}
	Let $\hat{\Omega}\subset\mathbb C$ be a domain. We choose a point $z\in \hat{\Omega}$ and a vector $\xi$ which is tangent to the plane at the point $z.$ Let $\triangle$ denote the unit disk in the complex plane.
	We define the {\em Kobayashi metric}
	$$
	F_{\hat{\Omega}}(z, \xi):=\inf\{\lambda>0 : \exists f: \triangle\stackrel{hol}{\longrightarrow} \hat{\Omega}, f(0)=z, \lambda f'(0)=\xi\}.
	$$

	Let $\gamma: [0, 1]\rightarrow \hat{\Omega}$ be a piecewise smooth curve.
	The {\em Kobayashi length} of $\gamma$ is defined to be 
	$$ L_{\hat{\Omega}} (\gamma)=\int_{\gamma} F_{\hat{\Omega}}(z, \xi) \lvert dz\rvert=\int_{0}^{1}F_{\hat{\Omega}}\big(\gamma(t), \gamma'(t)\big)\lvert \gamma'(t)\rvert dt.$$

	For any two points $z_1$ and $z_2$ in $\hat{\Omega}$, the {\em Kobayashi distance} between $z_1$ and $z_2$ is defined to be 
	$$d_{\hat{\Omega}}(z_1, z_2)=\inf\{L_{\hat{\Omega}} (\gamma): \gamma ~ \text{is a piecewise smooth curve connecting} ~z_1~ \text{and} ~z_2 \}.$$

	Note that $d_{\hat{\Omega}}(z_1, z_2)$ is defined where $z_1, z_2$ are in the same connected component of $\hat{\Omega}.$

	Let $d_E(z_1, z_2)$ denote the Euclidean metric distance for any two points $z_1, z_2\in\triangle.$ 
	
\end{definition}

We know that if $\hat{\Omega}=\triangle$, then the Kobayashi metric is the same as the Poincare metric (on page 19 in \cite{RefM}), i.e.,
$$
F_{\triangle}(z, \xi)=\frac{\lvert \xi\rvert}{1-\lvert z\rvert^2}.
$$

\begin{proposition}[The distance decreasing property of the Kobayashi Metric \cite{RefK}]\label{pro2}
	Suppose $\Omega_1, \Omega_2$ are domains in $\mathbb C$, $z, \omega\in \Omega_1, \xi\in\mathbb C,$ and $f:\Omega_1\rightarrow\Omega_2$ is holomorphic. Then 
	$$F_{\Omega_2}(f(z
	), f'(z)\xi)\leq F_{\Omega_1}(z, \xi), ~~~d_{\Omega_2}(f(z), f(\omega))\leq d_{\Omega_1}(z,\omega).$$
\end{proposition}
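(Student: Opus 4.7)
The plan is to reduce both claims to a single observation drawn directly from the definition of the Kobayashi infinitesimal metric: if $g : \triangle \to \Omega_1$ is an admissible test map at $(z, \xi)$ with scale $\lambda$, then $f \circ g : \triangle \to \Omega_2$ is automatically an admissible test map at $(f(z), f'(z)\xi)$ with the same scale $\lambda$. From this the infinitesimal inequality is immediate, and the distance inequality follows by integrating it along curves.

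First I would prove the infinitesimal inequality. Fix any admissible $\lambda > 0$ together with a holomorphic $g : \triangle \to \Omega_1$ satisfying $g(0) = z$ and $\lambda g'(0) = \xi$. The composition $h := f \circ g : \triangle \to \Omega_2$ is then holomorphic with $h(0) = f(z)$, and by the chain rule
$$
\lambda h'(0) \;=\; f'(z)\bigl(\lambda g'(0)\bigr) \;=\; f'(z)\,\xi .
$$
Thus $h$ witnesses $\lambda$ as admissible in the definition of $F_{\Omega_2}(f(z), f'(z)\xi)$, so $F_{\Omega_2}(f(z), f'(z)\xi) \le \lambda$. Passing to the infimum over all admissible $\lambda$ on the right yields $F_{\Omega_2}(f(z), f'(z)\xi) \le F_{\Omega_1}(z, \xi)$.

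Next I would promote this to the distance statement by a length argument. Let $\gamma : [0,1] \to \Omega_1$ be an arbitrary piecewise smooth curve joining $z$ to $\omega$, and set $\tilde{\gamma} := f \circ \gamma$, a piecewise smooth curve in $\Omega_2$ from $f(z)$ to $f(\omega)$. At each parameter $t$ the chain rule gives $\tilde{\gamma}'(t) = f'(\gamma(t))\,\gamma'(t)$, so the infinitesimal inequality applied pointwise yields
$$
F_{\Omega_2}\bigl(\tilde{\gamma}(t), \tilde{\gamma}'(t)\bigr) \;\le\; F_{\Omega_1}\bigl(\gamma(t), \gamma'(t)\bigr).
$$
Integrating over $[0,1]$ gives $L_{\Omega_2}(\tilde{\gamma}) \le L_{\Omega_1}(\gamma)$. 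Since $d_{\Omega_2}(f(z), f(\omega)) \le L_{\Omega_2}(\tilde{\gamma})$ for every such $\gamma$, taking the infimum over all piecewise smooth $\gamma$ in $\Omega_1$ joining $z$ to $\omega$ produces $d_{\Omega_2}(f(z), f(\omega)) \le d_{\Omega_1}(z, \omega)$.

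There is no serious obstacle here; the argument is a clean unwrapping of the definitions of $F_{\hat{\Omega}}$, $L_{\hat{\Omega}}$ and $d_{\hat{\Omega}}$ together with the chain rule. The one point requiring mild care is the passage to infima: because each inequality is established uniformly over all admissible $\lambda$ (respectively, all admissible $\gamma$), the infima on the right-hand sides propagate through correctly, even though they need not be attained by any particular test object.
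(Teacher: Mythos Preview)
Your argument is correct and is exactly the standard proof of the distance-decreasing property: push admissible disks forward through $f$ to get the infinitesimal inequality, then push curves forward and integrate to get the distance inequality. The paper itself does not supply a proof of this proposition at all; it merely states the result with a citation to \cite{RefK}, so there is nothing to compare against beyond noting that what you wrote is the classical derivation one finds in that reference.
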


	\begin{corollary}\label{cor3}
	Suppose $\Omega_1\subseteq\Omega_2\subseteq\mathbb C.$ Then for any $z, \omega\in \Omega_1$ and $\xi\in\mathbb C,$ we have 
	$$F_{\Omega_2}(z, \xi)\leq F_{\Omega_1}(z, \xi), ~~~~d_{\Omega_2}(z, \omega)\leq d_{\Omega_1}(z,\omega).$$
\end{corollary}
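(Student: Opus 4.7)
The plan is to derive the corollary immediately from Proposition \ref{pro2} by applying the distance-decreasing property to the inclusion map. Since $\Omega_1 \subseteq \Omega_2$, the natural inclusion $\iota: \Omega_1 \hookrightarrow \Omega_2$, $\iota(w) = w$, is holomorphic with $\iota'(w) = 1$ for every $w \in \Omega_1$. This is the only nontrivial object one needs, and it requires no construction.

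For the infinitesimal inequality, I would substitute $f = \iota$ into the first inequality of Proposition \ref{pro2}, giving $F_{\Omega_2}(\iota(z), \iota'(z)\xi) = F_{\Omega_2}(z, \xi) \leq F_{\Omega_1}(z, \xi)$, which is exactly what is claimed. For the distance inequality, substituting $f = \iota$ into the second inequality of Proposition \ref{pro2} yields $d_{\Omega_2}(z, \omega) = d_{\Omega_2}(\iota(z), \iota(\omega)) \leq d_{\Omega_1}(z, \omega)$.

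If for some reason one wanted a proof directly from Definition \ref{def1} rather than an appeal to Proposition \ref{pro2}, the argument is equally short: any holomorphic map $g: \triangle \to \Omega_1$ with $g(0) = z$ and $\lambda g'(0) = \xi$ is automatically a holomorphic map into $\Omega_2$ with the same properties, so the set over which the infimum defining $F_{\Omega_2}(z, \xi)$ is taken contains the corresponding set for $F_{\Omega_1}(z, \xi)$, forcing $F_{\Omega_2} \leq F_{\Omega_1}$ pointwise. Integrating this pointwise inequality along any piecewise smooth curve in $\Omega_1$ (such a curve also lies in $\Omega_2$) gives $L_{\Omega_2}(\gamma) \leq L_{\Omega_1}(\gamma)$, and taking the infimum over $\gamma$ connecting $z$ and $\omega$ in $\Omega_1$ yields $d_{\Omega_2}(z, \omega) \leq d_{\Omega_1}(z, \omega)$.

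There is no real obstacle here; the corollary is essentially a reformulation of the distance-decreasing property in the special case of an inclusion. The only point worth flagging in the write-up is that the infimum defining $d_{\Omega_2}(z, \omega)$ is taken over a strictly larger class of curves (those that may leave $\Omega_1$ but remain in $\Omega_2$), which is precisely what makes the inequality go in the stated direction.
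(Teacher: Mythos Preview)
Your proposal is correct and matches the paper's intent: the paper states Corollary~\ref{cor3} immediately after Proposition~\ref{pro2} without an explicit proof, treating it as an immediate consequence of the distance-decreasing property applied to the inclusion map $\iota:\Omega_1\hookrightarrow\Omega_2$. Your write-up makes this implicit step explicit and is exactly what is intended.
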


\subsection{Main results about orbits inside the attracting basin}

Let $f:\mathbb{C}\rightarrow\mathbb{C}$ be a polynomial of degree $N$ and $\Omega$ be
an immediate basin of attraction that contains the attracting fixed point $p$ of $f.$ Then $\Omega$ is a connected component of $\mathcal{A}(p)$. 
In addition, an immediate basin of attraction of a holomorphic polynomial is simply connected. We can apply the Riemann mapping theorem to conjugate the immediate basin of attraction to the unit disk and send the attracting fixed point $p$ to $0$.
Hence we should study the proper holomorphic self maps of the unit disk.
Moreover, the proper maps on the unit disk can be written as 
$
g(z)=e^{i\theta}\Pi_{j=1}^m \frac{z-a_j}{1-\overline{a_j}z}
$ 
for $e^{i\theta}$ on the unit circle $\partial\triangle$, and $m\geq2$ since there is at least one critical point inside $\Omega$ (see Theorem 2.2 on page 59 in \cite{RefCG}), and constants $\lvert a_j\rvert<1$ with at least one of the $a_j=0$ since $f$ has an attracting fixed point $p$ which is sent to $0$.
Note that the degree of $g$ depends on how many critical points are inside $\Omega$, so $2\leq m\leq N.$ Therefore, instead of considering the orbits of polynomials on the attracting basin with an attracting fixed point at $p$, we only need to consider the orbits of Blaschke products on the unit disk with an attracting fixed point at $0$. 

First, we discuss the simplest case when all $a_j=0.$ Then for $g=e^{i\theta} z^m$, we have the following theorem.

\begin{theorem}\label{the4}
	Suppose $g(z)=e^{i\theta}z^m, m\geq2$, we pick a point $\hat{p}\in\triangle\setminus\{0\}$. Then there exists a constant $C_0>0$ such that for every point $z_0\in\triangle$, there exists $q\in \cup_k g^{-k}(\hat{p}), k\geq0$ satisfying $d_\triangle(z_0,q)\leq C_0$, where $d_\triangle$ is the Kobayashi distance on the unit disk $\triangle$.
\end{theorem}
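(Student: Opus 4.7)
The plan is to exploit the fact that under $g(z)=e^{i\theta}z^m$, the set $g^{-k}(\hat{p})$ is extremely structured: it consists of exactly $m^k$ equally spaced points on the circle $\{|z|=r_k\}$, where $r_0:=|\hat{p}|$ and $r_k:=r_0^{1/m^k}$. Since $r_k\nearrow 1$ as $k\to\infty$ while the angular spacing is $2\pi/m^k$, I expect these preimages to form a Kobayashi net of bounded mesh size in $\triangle$, from which the theorem will follow immediately.

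Concretely I will establish two uniform estimates. For the \emph{angular estimate}, the Kobayashi arc-length of the piece of $\{|z|=r_k\}$ between two cyclically adjacent preimages is
\[
\frac{2\pi r_k/m^k}{1-r_k^2},
\]
and I need this bounded uniformly in $k$. The key is to bound $m^k(1-r_k)$ from below, which I will do by showing that $u\mapsto(1-r_0^u)/u$ is decreasing on $(0,1]$; this reduces, after setting $b=u\log r_0$, to the elementary inequality $e^{|b|}\ge 1+|b|$. This gives $m^k(1-r_k)\ge 1-r_0$ and hence the arc-length bound $2\pi/(1-r_0)$. For the \emph{radial estimate}, the Kobayashi distance between the circles $\{|z|=r_k\}$ and $\{|z|=r_{k+1}\}$ is the explicit integral $\int_{r_k}^{r_{k+1}} dr/(1-r^2)$; using $r_k=r_{k+1}^m$ together with the factorisation $1-r_{k+1}^m=(1-r_{k+1})(1+r_{k+1}+\cdots+r_{k+1}^{m-1})\le m(1-r_{k+1})$, I will obtain a uniform bound of the form $\tfrac{1}{2}\log(2m)$.

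With these two bounds in hand I finish by cases on $|z_0|$. If $|z_0|\le r_0$, then $z_0$ and $\hat{p}=g^{-0}(\hat{p})$ both lie in the Euclidean disk $\{|z|\le r_0\}$, whose Kobayashi diameter is the finite constant $\log\frac{1+r_0}{1-r_0}$, so $d_\triangle(z_0,\hat{p})$ is bounded directly. Otherwise there is a unique $k\ge 0$ with $r_k\le|z_0|<r_{k+1}$; I build an explicit path by first moving $z_0$ radially to the circle $\{|z|=r_k\}$ (cost controlled by the radial estimate) and then along that circle to the nearest of the $m^k$ preimages on it (cost controlled by half the angular estimate). The triangle inequality then furnishes a uniform constant $C_0$ that works for every $z_0\in\triangle$.

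The principal obstacle is the angular estimate: both the spacing $2\pi/m^k$ in the numerator and the conformal factor $(1-r_k^2)^{-1}$ blow up as $k\to\infty$, and the whole theorem hinges on their rates cancelling. That cancellation is precisely the content of $m^k(1-r_0^{1/m^k})\ge 1-r_0$; once the monotonicity of $u\mapsto(1-r_0^u)/u$ is in place, everything else is a routine application of Corollary \ref{cor3} and the triangle inequality.
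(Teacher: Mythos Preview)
Your proposal is correct and follows the same route as the paper: the same case split on $|z_0|$, the same choice of $q$ on the circle $\{|z|=r_0^{1/m^k}\}$, and the same radial-then-arc path. The only difference is bookkeeping in bounding that path's Kobayashi length --- you separate the angular and radial pieces and invoke the monotonicity of $u\mapsto(1-r_0^u)/u$ together with the factorisation of $1-r_{k+1}^m$, whereas the paper bounds the conformal factor by its maximum on $\gamma$ and applies the mean value theorem twice; both yield explicit uniform constants.
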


\begin{proof}
	It is easy to describe the dynamics of $g(z)=e^{i\theta}z^m, m\geq2$ since all its iterates
	can be written very explicitly as $g^n(z)=e^{i\frac{1-m^n}{1-m}\theta}z^{m^n}$.
	Let the point $\hat{p}=r e^{i\theta_0}, 0<r<1,$
	then the inverse images of $\hat{p}$ are
	$$T_k :=se^{i\phi}, s=r^{1/m^k}, \phi=\frac{2j\pi +\theta_0-\frac{1-m^k}{1-m}\theta}{m^k},j=0,\dots, m^k-1.$$
	We need to show that all points are at most a finite distance from this sequence in the Kobayashi metric of the disk.

	If $\lvert z_0\rvert \leq \lvert \hat{p}\rvert,$ we can choose $q=\hat{p}$, and by the formula (on page 21 in \cite{RefM}) for the Kobayashi metric on the disk, we have that 
	\begin{equation}\label{eq8}
		d_\triangle(z_0, q)=d_\triangle(z_0,\hat{p})=d_\triangle(0, \frac{z_0-\hat{p}}{1-z_0\overline{\hat{p}}})=\ln \frac{1+\lvert\frac{z_0-\hat{p}}{1-z_0\overline{\hat{p}}}\rvert}{1-\lvert\frac{z_0-\hat{p}}{1-z_0\overline{\hat{p}}}\rvert}.
	\end{equation}
	Let $\sigma=\sup_{\lvert z\rvert\leq\lvert \hat{p}\rvert}\lvert\frac{z-\hat{p}}{1-z\overline{\hat{p}}}\rvert,$ then $0<\sigma<1,$ we obtain
	$$
	d_\triangle(z_0, q)\leq\ln \frac{1+\sigma}{1-\sigma}.
	$$
	Obviously, there is a constant $C'_0:=\ln \frac{1+\sigma}{1-\sigma}$ and a point $q=\hat{p}$ so that $d_\triangle(z_0, q)\leq C'_0$ for all $\lvert z_0\rvert\leq r=\lvert\hat{p}\rvert.$ 
	
	If $\lvert z_0\rvert> \lvert\hat{p}\rvert,$ we write $z_0=\rho e^{i\psi}$ for $\rho> \lvert\hat{p}\rvert, 0\leq\psi<2\pi$. We let $k\geq0$ be given by 
	$$s=r^{1/m^k}\leq \rho<r^{1/m^{k+1}},$$ 
	then let $0\leq j<m^k$ so that 
	$$
	\phi=\frac{2j\pi +\theta_0-\frac{1-m^k}{1-m}\theta }{m^k}\leq \psi\leq \frac{2 \pi (j+1)+\theta_0-\frac{1-m^k}{1-m}\theta}{m^k}.
	$$ 
	We let $q=se^{i \phi},$ then $g^k(q)=\hat{p},$ so $q\in g^{-k}(\hat{p}).$

	If $a<b\leq0,$ then the mean value theorem says there exists $a\leq c\leq b$ so that
	$$
	e^a(b-a)\leq e^b-e^a=(e^x)'_{x=c}(b-a)=e^c(b-a)\leq b-a.
	$$
	Hence 
	$$\Delta r:=\rho-s\leq r^{\frac{1}{m^{k+1}}}-r^{\frac{1}{m^k}}=e^{\frac{\ln r}{m^{k+1}}}-e^{\frac{\ln r}{m^k}}\leq\frac{\ln r}{m^{k+1}}-\frac{\ln r}{m^k}=-\frac{\ln r}{m^{k+1}}.$$

	Moreover, $$\Delta\theta:=\psi-\phi\leq \frac{2 \pi (j+1)+\theta_0-\frac{1-m^k}{1-m}\theta}{m^k} -\frac{2j\pi +\theta_0-\frac{1-m^k}{1-m}\theta }{m^k}=\frac{2\pi}{m^k}.$$ 
	Then we choose a curve $\gamma$ joining $z_0$ to $q$, firstly, following the radius from $z_0=\rho e^{i\psi}$ to $se^{i\psi}$, then walking along the arc on the circle with radius $s$ from $s e^{i \psi}$ to $se^{i \phi}$, see figure \ref{Figure3}. 
	\begin{figure}[htbp]
		\centering 
		\includegraphics[width=0.5\linewidth]{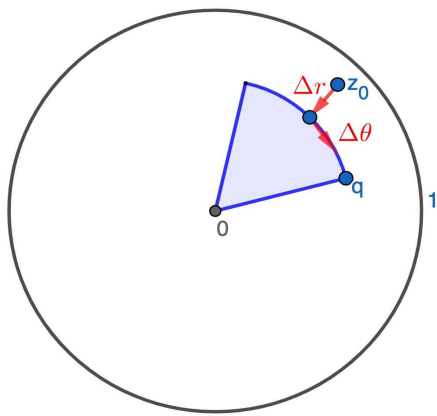}
		\caption{The curve $\gamma$ follows the red arrows from $z_0$ to $q$.}
		\label{Figure3}
	\end{figure}
	Therefore,
	\begin{equation*}
		\begin{aligned}
			d_{\triangle}(z_0, q)\leq&\int_\gamma\frac{\lvert dz\rvert}{1-\lvert z\rvert^2}
			\leq\int_\gamma\frac{\lvert dz\rvert}{1-\lvert z\rvert}
			\leq\int_\gamma\frac{1}{1-r^{\frac{1}{m^{k+1}}}}\lvert dz\rvert\\
			=&\int_\gamma\frac{1}{1-e^{\frac{\ln r}{m^{k+1}}}}\lvert dz\rvert
			=\int_\gamma\frac{1}{e^0-e^{\frac{\ln r}{m^{k+1}}}}\lvert dz\rvert
			=\int_\gamma\frac{1}{(e^x)'_{x=c}\big(0-\frac{\ln r}{m^{k+1}}\big)}\lvert dz\rvert\\
			\leq&\int_\gamma\frac{1}{e^{\frac{\ln r}{m^{k+1}}}\cdot\big(-\frac{\ln r}{m^{k+1}}\big)}\lvert dz\rvert
			\leq\int_\gamma\frac{1}{e^{\frac{\ln r}{m}}\cdot\big(-\frac{\ln r}{m^{k+1}}\big)}\lvert dz\rvert\\
			\leq&-\frac{ m^{k+1}}{\sqrt[m]{r}\ln r} \sqrt{(\Delta r)^2+(\Delta\theta)^2}
			\leq-\frac{ m^{k+1}}{\sqrt[m]{r}\ln r}\frac{\sqrt{(-\ln r)^2+4m^2\pi^2}}{ m^{k+1}}\\
			=&-\frac{1}{\sqrt[m]{r}\ln r }\sqrt{(-\ln r)^2+4m^2\pi^2}:=C''_0.
		\end{aligned}
	\end{equation*}

	In addition, $r$ is a fixed number that we pick for $\hat{p}=r e^{i\theta_0}$. 
	Therefore, there exists a constant $C_0$ and a point $q$ such that $	d_{\triangle}(z_0, q)\leq C_0,$ where we take $C_0=\max\{C'_0, C''_0\}.$

\end{proof}

Next, we generalize Theorem \ref{the4} to general Blaschke products 
$
g(z)=e^{i\theta}\Pi_{j=1}^m \frac{z-a_j}{1-\overline{a_j}z},
$ 
with some $a_j\neq0$. However, there is at least one of the $a_j=0$ since $0$ is a fixed point of $g.$ In this case, the role of $\hat{p}\in\triangle\setminus\{0\}$ in Theorem \ref{the4} can now be $0.$

\begin{theorem}\label{theo2}
	Suppose $$g=e^{i \theta}z^{m_1}\prod_{l=m_1+1}^{m}\frac{z-a_l}{1-\bar{a}_lz}, ~m\geq2,~ 1\leq m_1<m, ~ a_l\neq0, ~a_l\in\triangle,~ e^{i\theta}\in \partial\triangle.$$
	There exists a constant $C_0$ such that for every point $z_0\in\triangle,$ there exists $q\in \cup_k g^{-k}(0)$ satisfying $d_\triangle(z_0,q)\leq C_0$.
\end{theorem}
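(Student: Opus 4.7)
The plan is to adapt the strategy of Theorem~\ref{the4}: first show that points inside a small forward-invariant disk around $0$ are within bounded Kobayashi distance of $0 \in g^{-0}(0)$, and then handle an arbitrary $z_0 \in \triangle$ by pulling a small neighbourhood of $0$ back through iterates of $g$ and applying the Kobayashi distance-decreasing property of Proposition~\ref{pro2}. The main difference with Theorem~\ref{the4} is that we no longer have an explicit polar-grid description of $\bigcup_k g^{-k}(0)$, so the construction of nearby preimages must be performed dynamically rather than by explicit computation.

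I would first choose $r_0 \in (0, 1)$ small enough that $B := \{w \in \mathbb{C} : |w| < r_0\}$ is forward $g$-invariant, i.e., $g(\overline{B}) \subset B$. Such an $r_0$ exists because $0$ is an attracting fixed point of $g$: when $m_1 = 1$ we have $|g'(0)| = \prod_{l > m_1} |a_l| < 1$, while for $m_1 \geq 2$ the map $g$ vanishes to order $m_1$ at $0$. For any $z$ in the smaller disk $B/2 := \{|w| \leq r_0/2\}$, Corollary~\ref{cor3} applied to the inclusion $B \subset \triangle$ gives $d_\triangle(z, 0) \leq d_B(z, 0) \leq C_1$ for some $C_1 > 0$ depending only on $r_0$. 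In particular, if $z_0 \in B/2$, one takes $q = 0$ and the conclusion holds with this $C_1$.

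For $z_0 \in \triangle \setminus B/2$, the iterates $g^n(z_0)$ converge to $0$, so there is a smallest $n_0 = n_0(z_0) \geq 1$ with $g^{n_0}(z_0) \in B/2$; set $w_0 := g^{n_0}(z_0)$. The idea is to lift a short curve $\sigma \subset B$ from $w_0$ to $0$ backwards through $g^{n_0}$, starting from $z_0$. If the lift is realised by a single-valued holomorphic inverse branch $h \colon D \to \triangle$ of $g^{n_0}$ defined on a simply connected domain $D \subset B$ containing $\sigma$, with $h(w_0) = z_0$, then $q := h(0) \in g^{-n_0}(0)$, and Proposition~\ref{pro2} applied to $h$ yields
\[
d_\triangle(z_0, q) \;=\; d_\triangle\bigl(h(w_0), h(0)\bigr) \;\leq\; d_D(w_0, 0) \;\leq\; C_0
\]
for a constant $C_0$ that depends only on the geometry of $D$ inside $B$.

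The main obstacle is the construction of the domain $D$ and the inverse branch $h$ with a bound on $d_D(w_0, 0)$ that is uniform in $n_0$. Because the critical orbits of $g$ in $\triangle$ are attracted to $0$, the set of critical values of $g^{n_0}$ inside $B$ accumulates on $0$ as $n_0 \to \infty$, so no inverse branch of $g^{n_0}$ exists globally on $B$. One must instead perturb $\sigma$ to avoid the finitely many critical values of $g^k$ for $1 \leq k \leq n_0$ that lie in $B$, and then cut suitable slits from these critical values to $\partial B$ — without crossing $\sigma$ — to produce a simply connected $D$ on which $h$ is well defined. Verifying that this surgery can always be carried out so that $d_D(w_0, 0)$ stays bounded by a constant independent of $z_0$ and $n_0$ (in particular that the slits do not pinch $\sigma$) is the technical heart of the proof, and is where the specific structure of the Blaschke product $g$ near the attracting fixed point at $0$ is used.
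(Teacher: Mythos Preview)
Your strategy diverges from the paper's in a crucial way, and the part you flag as ``the technical heart'' is not merely technical: as stated it is a genuine gap.

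You propose to pull back a short curve $\sigma\subset B$ from $w_0=g^{n_0}(z_0)$ to $0$ along the inverse branch of $g^{n_0}$ that sends $w_0$ to $z_0$. But the critical values of $g^{n_0}$ are exactly the forward images $\{g^{k}(c):1\le k\le n_0,\ c\in\mathrm{Crit}(g)\}$, and since every critical orbit converges to $0$, these accumulate on $0$ as $n_0\to\infty$. For large $n_0$ they can lie between $w_0$ and $0$, and indeed arbitrarily close to $w_0$ itself (nothing prevents $g^{n_0}(z_0)$ from being $\epsilon$-close to some $g^{k}(c)$). Your proposed surgery --- perturb $\sigma$ and cut slits to $\partial B$ --- then has to thread through a point set whose geometry depends on $n_0$ and on $z_0$, and you give no mechanism to ensure $d_D(w_0,0)$ stays bounded. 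Invoking ``the specific structure of $g$ near $0$'' does not help: the Koenigs/B\"ottcher branch of $g^{-n_0}$ near $0$ is the branch fixing $0$, not the branch taking $w_0$ back to a faraway $z_0$, so linearization does not furnish the $h$ you need.

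The paper avoids this difficulty by working at the opposite end of the disk. It first shows $|g'|>1$ on $\partial\triangle$ (Lemma~\ref{lem1}), hence $|g'|>1+\epsilon$ on an annulus $\mathbb A=\{r_0<|z|<1\}$, which in particular is free of critical points. It then proves $|g^{-1}(z)|>|z|$ on $\mathbb A$ (Lemma~\ref{lem5}), so inverse branches push points toward $\partial\triangle$ and stay in $\mathbb A$. Given $z_0$ near the boundary, one iterates \emph{forward} until $z_0'=g^{M_1}(z_0)$ lands in a fixed compact sub-annulus $\mathbb B_1\subset\subset\mathbb A$; by density of $\bigcup_k g^{-k}(0)$ on $\partial\triangle$ there is also a preimage $q_0\in\mathbb B_1$ of $0$. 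One removes a fixed thin sector from $\mathbb A$ to obtain a simply connected $\mathbb A^+$ containing $z_0',q_0$, on which the relevant inverse branches $g^{-n}\colon\mathbb A^+\to\mathbb A$ are univalent for all $n$. Then Proposition~\ref{pro2} gives
\[
d_\triangle(z_0,q)\le d_{\mathbb A}(z_0,q)\le d_{\mathbb A^+}(z_0',q_0)\le d_{\mathbb B_1^+}(z_0',q_0)\le C_0,
\]
with $C_0$ depending only on the fixed compact $\mathbb B_1^+\subset\subset\mathbb A^+$. The uniformity comes for free because the domain $\mathbb A^+$ is fixed once and for all, independent of $z_0$ and of the number of iterates --- precisely what your construction near $0$ cannot guarantee.

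In short: the workable idea is to exploit the \emph{expanding}, critical-point-free dynamics near $\partial\triangle$, not the attracting dynamics near $0$. If you want to rescue your outline, you would need to replace the target $0$ by a fixed preimage $a_l\in g^{-1}(0)$ lying near $\partial\triangle$ and run the pull-back there; but at that point you are essentially reproducing the paper's argument.
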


\begin{proof}
	
	First of all,  if $\lvert z_0\rvert\leq\rho<1$ for some constant $\rho$ to be chosen late, then we choose $q=0.$ 
	Then we have 
	$$
	d_\triangle(z_0, q)=\ln\frac{1+\lvert z_0\rvert}{1-\lvert z_0\rvert}\leq\ln\frac{1+\rho}{1-\rho}:=C'_0.
	$$
	Therefore, there is a constant $C'_0$ such that for every point $z_0\in\triangle$ and $\lvert z_0\rvert\leq\rho$, there exists a $q=0$ satisfying $d_{\triangle}(z_0, q)\leq C'_0$.

	Secondly, if $\lvert z_0\rvert>\rho,$
	then we will show when there still exists a point $q$ such that $d_{\triangle}(z_0, q)$ is uniformly bounded even when $q$ is very close to $\partial\triangle$.
	
	Let us first prove that the preimages of $0$ are dense in the Euclidean metric distance on the boundary of the unit disk, i.e., the set 
	$
	\cup_{n\rightarrow\infty}\{g^{-n}(0)\}
	$ 
	clusters at every point in $\partial\triangle$. 
	First of all, we know that the Blaschke product $g$ carries $\triangle$ onto $\triangle$ and $\hat{\mathbb C}\setminus\bar{\triangle}$ onto $\hat{\mathbb C}\setminus\bar{\triangle}$, and the Julia set of $g$ is the unit circle. 
	See Theorem 1.8 on page 57 and the corresponding  example on page 58 in \cite{RefM}. 
	Secondly, suppose $\tilde{z}_0$ is an arbitrary point on the Julia set on $\partial\triangle$, and we take a small neighborhood $\tilde{\Omega}$ of $\tilde{z}_0$, then 
	$
	\cup_{n\rightarrow\infty}\{g^{n}(\tilde{\Omega})\}
	$ 
	can not avoid more than two points in $\hat{\mathbb C}$ since it is not a normal family. Hence 
	$
	0, z_1:=g^{-1}(0)\neq0
	$ and
	$z_2:=g^{-2}(0)\neq0$ 
	cannot be all outside of 
	$
	\cup_{n\rightarrow\infty}\{g^{n}(\tilde{\Omega})\}.
	$ 
	Suppose
	$
	0\notin\cup_{n\rightarrow\infty}\{g^{n}(\tilde{\Omega})\},
	$ 
	then $z_1$ or $z_2$
	must be inside 
	$
	\cup_{n\rightarrow\infty}\{g^{n}(\tilde{\Omega})\},
	$
	then 
	$
	0\in\cup_{n\rightarrow\infty}\{g^{n+1}(\tilde{\Omega})\}
	$ or $0\in\cup_{n\rightarrow\infty}\{g^{n+2}(\tilde{\Omega})\}$. 
	It implies that there exists a point $\tilde{z}\in\tilde{\Omega}$ and some integer $N$ such that $g^N(\tilde{z})=0$ and $\tilde{z}\in\tilde{\Omega}\cap\triangle.$
	Thus for any $\tilde{z}_0\in\partial\triangle$, there exists a sequence $\{g^{-n}(0)\}$ converges to $\tilde{z}_0.$ Therefore, the preimages of $0$ are dense in $\partial\triangle$.

	Note that this does not finish the proof because the Kobayashi distance from $\{g^{-n}(0)\}$ to points near the boundary might still be arbitrarily large. We still need to show that this is not the case.
	
	Before continuing with the proof of Theorem \ref{theo2}, we have the following two lemmas for the orbits near the boundary of the unit disk.
	\begin{lemma}\label{lem1}(\cite{RefG})
		$\lvert g'(z)\rvert>1$ for all $z\in\partial\triangle$.
	\end{lemma}
	\begin{proof}
		First of all, let $h(z)=e^{i \theta}\prod_{k=1}^{m}\frac{z-a_k}{1-\bar{a}_kz}:= e^{i \theta}h_1h_2\cdots h_m$
		where $h_i=\frac{z-a_k}{1-\bar{a}_kz}, 1\leq i\leq m.$
		Then 
		\begin{equation*}
			\begin{aligned}
				\frac{h'}{h}&=\frac{h'_1}{h_1}+\frac{h'_2}{h_2}+\cdots+\frac{h'_m}{h_m}
				=\sum_{k=1}^{m}\frac{1-a_k\bar{a}_k}{(1-\bar{a}_kz)^2}\cdot\frac{1-\bar{a}_kz}{z-a_k}\\
				&=\sum_{k=1}^m\frac{1-a_k\bar{a}_k}{(1-\bar{a}_kz)(z-a_k)}.\\
			\end{aligned}
		\end{equation*}
		In addition, for all $\zeta\in\partial\triangle,$ we have $\zeta\bar{\zeta}=1$ and $\lvert h(\zeta)\rvert=1.$ Hence
		\begin{equation}{\label{eq9}}
			\begin{aligned}
				\zeta\frac{h'(\zeta)}{h(\zeta)}
				=\sum_{k=1}^m\frac{1-\lvert a_k\rvert^2}{\bar{\zeta}{(1-\bar{a}_k\zeta)(\zeta-a_k)}}
				=\sum_{k=1}^m\frac{1-\lvert a_k\rvert^2}{\lvert \zeta-a_k\rvert^2}.
			\end{aligned}
		\end{equation} 
		Since $g(z)=e^{i \theta}z^{m_1}\prod_{l=m_1+1}^{m}\frac{z-a_l}{1-\bar{a}_lz}$ and the equation (\ref{eq9}), we know some $a_k=0$, then $$\lvert g'(z)\rvert=m_1+\sum_{l=m_1+1}^{m}\frac{1-\lvert a_l\rvert^2}{\lvert z-a_l\rvert^2}>m_1\geq1,$$
		for all $z\in\partial\triangle.$
		
	\end{proof}

	Therefore, we can choose $r_0\in(0, 1)$ so that $\lvert g'(z)\rvert>1+\epsilon$ for any $z$ inside the annulus $\mathbb A:=\{z: r_0<\lvert z\rvert<1\}$ and some fixed $\epsilon>0$.
	
	\begin{lemma}\label{lem5}
		$\lvert g^{-1}(z)\rvert>\lvert z\rvert$ for all $z\in\mathbb A.$ 
	\end{lemma}
	
	\begin{proof}

		We choose a point $\omega=e^{i\theta}\in\partial\triangle$ and draw a straight line $L$ from $\partial\mathbb A$ to $\omega$ such that $L$ is perpendicular to $\partial\mathbb A$. Let $\gamma(t):=te^{i\theta}, r_0< t\leq1$ be a subline of $L$ and $\eta$ be any preimage of $\omega$, i.e., $\eta=g^{-1}(\omega)$. We consider the branch of $g^{-1}(\gamma(t))$ satisfying with $g^{-1}(\omega)=\eta.$

		Let $t_0:=1-\delta$ be the minimum constant such that $g^{-1}(\gamma(t))\in\mathbb A$ for all $t\in (t_0, 1).$
		We know that $0<\lvert \big(g^{-1}(z)\big)'\rvert<1/(1+\epsilon)<1$ for $z\in\mathbb A$, and hence the Euclidean distance from $g^{-1}(z)=g^{-1}(\gamma(t))$ to the boundary is 
		\begin{equation*}
			\begin{aligned}
				1-\lvert g^{-1}(z)\rvert
				&\leq d_E(g^{-1}(z), g^{-1}(e^{i\theta}))\leq \int_{\lvert z\rvert}^{1}\lvert(g^{-1}(\gamma(t)))'\rvert\lvert\gamma'(t)\rvert dt\\
				&\leq\int_{\lvert z\rvert}^{1}\frac{1}{1+\epsilon}\lvert\gamma'(t)\rvert dt
				=\frac{1}{1+\epsilon}\int_{\lvert z\rvert}^{1}\lvert e^{i\theta}\rvert dt=\frac{1-\lvert z\rvert}{1+\epsilon}
				<1-\lvert z\rvert,
			\end{aligned}
		\end{equation*}
		hence $\lvert g^{-1}(z)\rvert>\lvert z\rvert$ for all $g^{-1}(\gamma(t))\in\mathbb A, t\in(t_0, 1)$.

		If $t_0=1-\delta=r_0,$ then the proof is done. If $t_0=1-\delta>r_0,$ then suppose $t$ is a little bit smaller than $t_0$, i.e., $t=t_0-\epsilon$ for sufficient small $\epsilon$, then  $\lvert g^{-1}(\gamma(t))\rvert=\lvert g^{-1}(\gamma(t_0-\epsilon))\rvert=\lvert g^{-1}(\gamma(t_0)\rvert-\epsilon'\geq t_0-\epsilon'$ for a small $\epsilon'$ by continuity of $g^{-1}(t)$. Hence $g^{-1}(\gamma(t_0-\epsilon))\in\mathbb A.$ 
		It implies that $g^{-1}(\gamma(t))$ still can be continued to $I:=(t_0-\epsilon, t_0]$. 
		Then it contradicts that $t_0$ is the minimum constant such that $g^{-1}(\gamma(t))\in\mathbb A.$ 
		Therefore, $t_0=r_0$ which means 
		that $\lvert g^{-1}(z)\rvert>\lvert z\rvert$ for all $z\in\mathbb A$. This finishes the proof of Lemma \ref{lem5}.

	\end{proof}

	We continue with the proof of Theorem \ref{theo2}. We consider the inverse orbit of $0$ near the boundary of the unit disk precisely, especially we investigate the inverse orbit inside $\mathbb A.$ 
	
	Let $D_0=g^{-1}(\triangle\setminus\mathbb A)$, and choose $r_1>r_0$ such that $D_0\subset\mathbb B_0=\{z: \lvert z\rvert\leq r_1<1\}.$ Let $D_1=g^{-1}(\mathbb B_0),$ then we can choose $r_2>r_1$ such that $D_1\subset\mathbb B$ where $\mathbb B=\{ z: \lvert z\rvert<r_2<1\}.$ Let $\mathbb B_1:=\{z: r_1<\lvert z\rvert<r_2\}$, then for any $z_0\in\triangle\setminus\mathbb B,$ there exists a point $z'_0:=g^{M_1}(z_0)\in\mathbb B_1,$ $z'_1:=g^{M_1-1}(z_0)\in \triangle\setminus\mathbb B$ for some integer $M_1$ (See figure \ref{Figure2}).
	\begin{figure}[htbp]
		\centering 
		\includegraphics[width=0.6\linewidth]{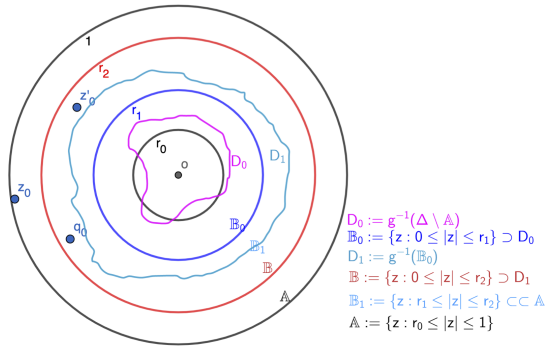}
		\caption{Domains near the boundary of the unit disk.}
		\label{Figure2}
	\end{figure}
	Since the preimages of $0$ are dense in the boundary of the unit disk, we can calculate all preimages of $0$ and choose some integer $M_2$ such that $q_0:=g^{-M_2}(0)\in\mathbb B_1, q_1:=g^{-1}(q_0)\in\triangle\setminus\mathbb B.$
	
	Now, we need to choose a sector to be $\mathbb A^+$ such that $z'_0, q_0\in \mathbb A^+$, but $z'_0, q_0$ are neither on $\partial\mathbb A^+$ or very close to $\partial\mathbb A^+$:
	first, choose a sector $S$ with angle $\frac{\pi}{3}$ and one boundary line of $S$ passes through $0$ and $z'_0$ such that $q_0\notin S.$ Secondly, choosing a sector $\tilde{S}$ inside $S$ with angle $\frac{\pi}{5}$ such that $\tilde{S}$ and $S$ have the same angle bisector region of $\mathbb A$ not including $z'_0$ and $q_0.$ Then we let $\mathbb A^+:=\mathbb A\backslash(\tilde{S}\cap\mathbb A).$

	Since we know precisely the orbit $\{z_n\}$, we can choose the inverse map to be the branch of $g^{-1}(z)$ on $\mathbb A^+$ such that  $g^{-1}(z'_0)=g^{-1}(g^{M_1}(z_0))=g^{M_1-1}(z_0)=z'_1$ sending $q_0$ to $q_1$. In addition, there are no critical points inside $\mathbb A$ since $\lvert g'(z)\rvert>1+\epsilon$ and $0<\lvert\big(g^{-1}(z)\big)'\rvert<\frac{1}{1+\varepsilon}$ for all $z\in\mathbb A.$
	Then the branch of the inverse map $g^{-1}(z)$ which we choose is well defined in $\mathbb A^+.$ 
	Inductively, we can, in the same way, define $g^{-n}(z)$ in $\mathbb A^+.$
	We obtain the inverse map $g^{-n}(z): \mathbb A^+\rightarrow\mathbb A$ is analytic. And the annulus $\triangle\setminus\mathbb B$ contains $z'_1, z'_2, z'_3, \cdots, z'_n$ and $q_1, q_2, q_3, \cdots, q_n.$
	
	Next, we estimate the Kobayashi distance. 
	Choosing a sector $\bar{S}\subset\tilde{S}$ with an angle $\frac{\pi}{4},$ then
	let $\mathbb B_1^+:=\mathbb B_1\backslash(\bar{S}\cap\mathbb B_1)\subset\mathbb A^+$, then $\mathbb B_1^+\subset\subset\mathbb A$ including $z'_0, q_0.$ Hence there is a constant $C_0$ such that 
	\begin{equation}\label{eq3.1}
		d_{\mathbb B_1^+}(z'_0, q_0)\leq C_0.
	\end{equation}
	In addition, by Corollary \ref{cor3}, we have
	\begin{equation}\label{eq3.2}
		d_{\mathbb A^+}(z'_0, q_0)< d_{\mathbb B_1^+}(z'_0, q_0).
	\end{equation}
	Inductively, $g^{-M_1}(z): \mathbb A^+\rightarrow\mathbb A$ sends the point $z'_0, q_0\in \mathbb A^+$ to the point $z_0, q_{M_1}\in\mathbb A,$ respectively. The Kobayashi distance is decreasing by Proposition \ref{pro2}, then we obtain 
	\begin{equation}\label{eq3.3}
		d_{\mathbb{A}}(z_0, q_{M_1})\leq d_{\mathbb A^+}(z'_0, q_0).
	\end{equation}
	Since $\mathbb A\subset \triangle$, using Corollary \ref{cor3} once again, we know 
	\begin{equation}\label{eq3.4}
		d_\triangle(z_0, q_{M_1})\leq d_{\mathbb{A}}(z_0, q_{M_1}). 
	\end{equation}
	Therefore, from equations (\ref{eq3.1}) to (\ref{eq3.4}), there is a constant $C_0$ and a point $q:=q_{M_1}\in\cup_{k}g^{-k}(0)$ such that
	\begin{equation}
		d_\triangle(z_0, q)\leq d_{\mathbb{A}}(z_0, q)=d_{\mathbb A}(z_0, q_{M_1})\leq d_{\mathbb A^+}(z'_0, q_0)<d_{\mathbb B_1^+}(z'_0, q_0) \leq C_0,
	\end{equation}
	for some integer $k.$ 
	Choosing $\rho=r_2$, we finally prove the theorem. 
	
\end{proof}

Now, we can obtain our main theorems as follows:
\begin{theorem}\label{theom1}
	Suppose $f(z)$ is a polynomial of degree $N\geq2$ on $\mathbb{C}$, $\Omega$ is an immediate attracting basin of $f(z)$ and $p$ is an attracting fixed point inside $\Omega$, $\{f^{-1}(p)\}\cap \Omega\neq\{p\}$. 
	Then there is a constant $C$ such that for every point $z_0\in \Omega$, there exists a point $q\in \cup_k f^{-k}(p), k\geq0$ so that $d_\Omega(z_0, q)\leq C,  d_\Omega$ is the Kobayashi distance on $\Omega$. 
\end{theorem}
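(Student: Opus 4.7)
The plan is to reduce the statement to the disk model used for Blaschke products in Section \ref{sec2} and then quote Theorem \ref{theo2}. The Kobayashi distance is a biholomorphic invariant, so this transfer is essentially cost-free once the right Blaschke model is identified.

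First, since $\Omega$ is the immediate basin of an attracting fixed point of a polynomial, it is simply connected, so the Riemann mapping theorem provides a biholomorphism $\varphi:\Omega\to\triangle$ with $\varphi(p)=0$. Because $f|_\Omega:\Omega\to\Omega$ is a proper holomorphic surjection (as recalled at the start of Section \ref{sec2}), the conjugate $g:=\varphi\circ f\circ\varphi^{-1}:\triangle\to\triangle$ is a proper holomorphic self-map of $\triangle$ fixing $0$, hence a finite Blaschke product
$$g(z)=e^{i\theta}\prod_{j=1}^{m}\frac{z-a_j}{1-\overline{a_j}z},$$
with $|a_j|<1$, $m\geq 2$ (the immediate basin contains at least one critical point of $f$, which transfers to a critical point of $g$), and with at least one $a_j=0$ since $g(0)=0$.

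Next, I would use the hypothesis $\{f^{-1}(p)\}\cap\Omega\neq\{p\}$ to place $g$ in the regime of Theorem \ref{theo2}. The set-theoretic preimage $g^{-1}(0)$ equals $\{a_1,\dots,a_m\}$, and $\varphi$ identifies $g^{-1}(0)\subset\triangle$ with $f^{-1}(p)\cap\Omega$. If every $a_j$ were $0$ we would have $f^{-1}(p)\cap\Omega=\{p\}$, contradicting the hypothesis; so at least one $a_j\neq 0$ and, after relabelling, $g$ takes the form $g(z)=e^{i\theta}z^{m_1}\prod_{l=m_1+1}^{m}\frac{z-a_l}{1-\overline{a_l}z}$ with $1\leq m_1<m$, $a_l\neq 0$, which is exactly the hypothesis of Theorem \ref{theo2}.

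I would then apply Theorem \ref{theo2} to obtain a constant $C>0$ such that for every $w\in\triangle$ there exists $q'\in\bigcup_{k}g^{-k}(0)$ with $d_\triangle(w,q')\leq C$. Given $z_0\in\Omega$, set $w=\varphi(z_0)$, choose a corresponding $q'$, and let $q:=\varphi^{-1}(q')\in\Omega$. Iterating the conjugacy $f^k=\varphi^{-1}\circ g^k\circ\varphi$ shows $f^k(q)=p$, so $q\in\bigcup_k f^{-k}(p)$; and since $\varphi$ is a biholomorphism, the distance-decreasing property (Proposition \ref{pro2}) applied to both $\varphi$ and $\varphi^{-1}$ yields
$$d_\Omega(z_0,q)=d_\triangle(\varphi(z_0),q')\leq C,$$
which is the required estimate. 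I do not anticipate a real obstacle: all the delicate analysis — density of preimages of $0$ on $\partial\triangle$, the boundary expansion $|g'|>1$, and the construction of the sector $\mathbb{A}^+$ on which a univalent branch of $g^{-1}$ is defined — is already encapsulated in Theorem \ref{theo2}. The only point requiring care is verifying that $f|_\Omega$ is proper so that the conjugate $g$ is genuinely a finite Blaschke product rather than a general holomorphic self-map of $\triangle$; this follows because $f$ is a polynomial, hence proper on $\widehat{\mathbb{C}}$, restricted to an invariant Fatou component.
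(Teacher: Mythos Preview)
Your proposal is correct and follows essentially the same approach as the paper: conjugate $f|_\Omega$ via a Riemann map to a finite Blaschke product on $\triangle$ fixing $0$, then invoke Theorem \ref{theo2} and pull the estimate back by biholomorphic invariance of the Kobayashi distance. Your write-up is in fact more explicit than the paper's own proof, which simply records the conjugation and cites Theorem \ref{theo2}; in particular you spell out why the hypothesis $\{f^{-1}(p)\}\cap\Omega\neq\{p\}$ guarantees that not all $a_j$ vanish, placing $g$ squarely in the setting of Theorem \ref{theo2} rather than Theorem \ref{the4}.
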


This theorem would imply that the orbit of $z_0$ behaves like the orbit of some preimage of the fixed point $p$. More precisely, the orbit of $z_0$ is shadowed by the orbit of the point $f^{-k}(p)$
for some positive integer $k$. Hence it would be enough to consider only these preimage points $f^{-k}(p)$. But then, instead of considering the forward orbit of the preimage of the fixed point $p$, we can equivalently study the backward orbit of the fixed point $p$. Note that this is very useful for practical purposes because these inverse orbits can be color plotted. 

\begin{proof}
	Since $\Omega$ is an immediate attracting basin of $f(z)$, $p$ is an attracting fixed point inside $\Omega$, we know that $f(\Omega)=\Omega$ and $\Omega$ is simply connected, then by the Riemann mapping theorem,
	we can conjugate $f$ to $g=e^{i \theta}\prod_{j=1}^{m}\frac{z-a_j}{1-\bar{a}_jz},$ which is a proper self map of the unit disk
	$\triangle$, with an attracting fixed point at the origin, and $g$ is a finite Blaschke product. We refer the reader to Lemma 15.5 on page 163 in \cite{RefM} for more details.
	Then this theorem is true because of Theorem \ref{theo2}.

\end{proof}

In the remainder of this section, we will prove that Theorem \ref{theom1} still holds when $\Omega$ is the whole basin of attraction.

\begin{theorem}\label{the3}
	Suppose $f(z)$ is a polynomial of degree $N\geq 2$ on $\mathbb{C}$, $p$ is an attracting fixed point of $f(z),$ $\Omega_1$ is an immediate basin of attraction of $p$, $\{f^{-1}(p)\}\cap \Omega_1\neq\{p\}$,
	$\mathcal{A}(p)$ is the basin of attraction of $p$, $\Omega_i (i=1, 2, \cdots)$ are the connected components of $\mathcal{A}(p)$. Then there is a constant $\tilde{C}$ so that for every point $z_0$ inside any $\Omega_i$, there exists a point $q\in \cup_k f^{-k}(p)$ inside $\Omega_i$ such that $d_{\Omega_i}(z_0, q)\leq \tilde{C}$, where $d_{\Omega_i}$ is the Kobayashi distance on $\Omega_i.$  
\end{theorem}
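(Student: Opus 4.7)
The plan is to lift Theorem~\ref{theom1} from the immediate basin $\Omega_1$ to each component $\Omega_i$ via the iterate $f^{n_i}$ that sends $\Omega_i$ onto $\Omega_1$. Since every orbit in $\Omega_i$ converges to $p$ and $f$ maps Fatou components to Fatou components, one may define $n_i:=\min\{n\geq 0 : f^n(\Omega_i)=\Omega_1\}$ (so $n_1=0$). Each component of an attracting polynomial basin is simply connected, so $f^{n_i}\colon\Omega_i\to\Omega_1$ is a proper holomorphic surjection between simply connected hyperbolic planar domains; via Riemann maps $\varphi_i\colon\triangle\to\Omega_i$ and $\varphi_1\colon\triangle\to\Omega_1$ with $\varphi_1(0)=p$ and $\varphi_i(0)\in f^{-n_i}(p)\cap\Omega_i$, the conjugate $B_i:=\varphi_1^{-1}\circ f^{n_i}\circ\varphi_i$ is a finite Blaschke product of $\triangle$ fixing $0$.

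Given $z_0\in\Omega_i$, I put $w_0:=f^{n_i}(z_0)\in\Omega_1$. Theorem~\ref{theom1} yields a $q'\in\bigcup_k f^{-k}(p)\cap\Omega_1$ with $d_{\Omega_1}(w_0,q')\leq C$. Since $f^{n_i}$ is surjective, $(f^{n_i})^{-1}(q')\cap\Omega_i\neq\emptyset$, and every element lies in $\bigcup_k f^{-k}(p)$. To pick the right preimage, I would lift a nearly hyperbolic-geodesic path $\gamma$ from $w_0$ to $q'$ in $\Omega_1$ through the proper map $f^{n_i}$ to a path $\tilde\gamma$ in $\Omega_i$ starting at $z_0$ (perturbing $\gamma$ slightly to avoid the finitely many critical values of $f^{n_i}|_{\Omega_i}$), and set $q:=\tilde\gamma(1)$. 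Then $d_{\Omega_i}(z_0,q)\leq L_{\Omega_i}(\tilde\gamma)$, so the task reduces to bounding the length of $\tilde\gamma$, or equivalently, the hyperbolic length in $\triangle$ of the lift of $\varphi_1^{-1}(\gamma)$ through $B_i$ starting at $\varphi_i^{-1}(z_0)$.

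The main obstacle is securing a bound on this lifted length by a constant $\tilde{C}$ uniform in $i$. Schwarz--Pick only gives lifts at least as long as the base path; when $\gamma$ passes near critical values of $B_i$ the lift can be much longer, and $\deg B_i\leq N^{n_i}$ can be large. To gain uniformity I would mimic the proof of Theorem~\ref{theo2} inside $\triangle$ for $B_i$: since $B_i(0)=0$ with multiplicity $m_i\geq 1$, Lemma~\ref{lem1} gives $|B_i'(\zeta)|>1$ on $\partial\triangle$, so on an annulus $\{r<|z|<1\}$ with $r$ taken uniformly in $i$ (using the expansion of $f$ near $J(f)$ through the polynomial structure) one has $|B_i'|>1+\varepsilon$, and by the argument of Lemma~\ref{lem5} the chosen inverse branches of $B_i$ contract both Euclidean and hyperbolic distances at a uniform rate. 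Combining the boundary-annulus estimate with a compactness argument for the inner part of $\triangle$ (where preimages of $0$ are Kobayashi-dense with constant inherited from Theorem~\ref{theo2} applied to the self-map on $\Omega_1$) yields a universal $\tilde{C}$, completing the proof.
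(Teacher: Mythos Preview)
Your overall strategy---push $z_0$ forward to $\Omega_1$ by $f^{n_i}$, apply Theorem~\ref{theom1} there, and lift a short path back through the proper map---is the same as the paper's. The gap is exactly where you flag it: the uniformity in $i$ of the lifted bound. You attempt to control the lift through the single Blaschke product $B_i=\varphi_1^{-1}\circ f^{n_i}\circ\varphi_i$ by finding an annulus $\{r<|z|<1\}$ on which $|B_i'|>1+\varepsilon$ with $r,\varepsilon$ independent of $i$. This is not justified. The degree of $B_i$ can be as large as $N^{n_i}$; Lemma~\ref{lem1} gives only $|B_i'(\zeta)|>m_i\geq 1$ on $\partial\triangle$, and the multiplicity $m_i$ of $B_i$ at $0$ may well equal $1$ (whenever $\varphi_i(0)$ is a non-critical $f^{n_i}$-preimage of $p$), while the remaining Blaschke zeros can accumulate toward $\partial\triangle$, making $|B_i'|-1$ arbitrarily small on parts of the circle. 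Your appeal to ``expansion of $f$ near $J(f)$'' would require $f$ to be hyperbolic on its Julia set, which is not assumed. Once $r$ depends on $i$, the ``compactness argument for the inner part'' also loses uniformity.

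The paper sidesteps this by never composing: it lifts one step of $f$ at a time along the chain $\Omega_i\to f(\Omega_i)\to\cdots\to\Omega_1$. The key observation you are missing is that $f$ has at most $N-1$ critical points, so only finitely many components in any such chain contain a critical point; on every other component $f$ restricts to a biholomorphism and is therefore a Kobayashi \emph{isometry}, contributing zero distortion. For each of the finitely many critical steps the paper proves a one-step bound (its Cases~1--4), handling the near-boundary case not via Lemmas~\ref{lem1}--\ref{lem5} but via Wold's estimate comparing $d_{D_1^+}$ to $d_\triangle$ on a slit annulus. The final $\tilde C$ is then $C$ from Theorem~\ref{theom1} plus at most $N-1$ applications of this fixed single-step bound, which is uniform over all $\Omega_i$.
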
 

\begin{proof}
	
	Since $f$ is a polynomial with an attracting fixed point $p\in\Omega_1$. Then the connected components of $\mathcal{A}(p)$ have only two situations:
	
	(1) $\mathcal{A}=\Omega_1.$ Then this theorem is essentially as same as Theorem \ref{theom1}.

	(2) $\mathcal{A}$ has at least two connected components, then $\mathcal{A}$ has infinitely many connected components.
	Suppose $\Omega_2$ is another connected component of $f$ which is distinct  from $\Omega_1$ and $f(\Omega_2)=\Omega_1$. We know $f(\Omega_1)=\Omega_1$, and then there must have the third component $\Omega_3$ which can be mapped to $\Omega_2$, and so on. It implies that $f$ has infinitely many connected components.

	For the second situation, let us first consider the orbit between two connected components, $\Omega_1$ and $\Omega_2,$ where $f(\Omega_2)=\Omega_1.$ If the start point $z_0$ is inside $\Omega_1,$ then the proof is done by Theorem \ref{theom1}. If the start point $z_0$ is inside $\Omega_2,$ then there is a point $\hat{z}_0:=f(z_0)\in\Omega_1.$ 
	By the above Theorem \ref{theom1}, we know that there is a constant $C$ such that for every point $\hat{z}_0\in \Omega_1$, there exists a point $\hat{q}\in \cup_k f^{-k}(p), k\geq0$ in $\Omega_1$ so that $d_{\Omega_1}(\hat{z}_0, \hat{q})\leq C,  d_{\Omega_1}$ is the Kobayashi distance on $\Omega_1$. 
	Next, we need to show that there is a constant $C'$ such that for every point $z_0\in \Omega_2$, there exists a point $q:=f^{-1}(\hat{q})\in\cup_k f^{-k}(p), k\geq0$ in $\Omega_2$ so that $d_{\Omega_2}(z_0, q)\leq C'.$

	Since $\Omega_1$ and $\Omega_2$ are simply connected, by the Riemann mapping theorem, there are two biholomorphic maps, $\psi_1: \Omega_1\rightarrow \triangle$ and $\psi_2: \Omega_2\rightarrow\triangle$. Then $f$ is conjugate with $g=\psi_1\circ f\circ \psi_2^{-1}$, which is a proper self-map of the unit disk $\triangle$. Hence $g$ is a Blaschke product. Then these three points $z_0\in\Omega_2, \hat{z}_0=f(z_0)\in\Omega_1, \hat{q}\in\Omega_1$ are sent to $\triangle$, we denote them by $Z_0, \hat{Z}_0, \hat{Q}\in\triangle,$ respectively, and $d_\triangle(\hat{Z}_0, \hat{Q})= d_{\Omega_1}(\hat{z}_0, \hat{q})<C$.  
	Therefore, it is equivalent to prove that there exists a point $Q=g^{-1}(\hat{Q})\in\triangle$ and a constant $C'$ (independent of $z_0$) such that $d_\triangle(Z_0, Q)<C'$. 
	
	We know that $g$ has finitely many critical points. We can choose a disk $\triangle(0, r_0)=\{z: \lvert z\rvert\leq r_0<1\}$ including all critical points of $g$. We denote $D_0=g(\triangle(0, r_0))$ and choose a disk $\triangle(0, R_0)=\{z: \lvert z\rvert\leq R_0<1\}$ such that $D_0\subseteq\triangle(0, R_0)$, then let $D_1:=\triangle\setminus\triangle(0, R_0)$, see the following Figure \ref{Figure5}.
	\begin{figure}[htbp]
		\centering 
		\includegraphics[width=0.6\linewidth]{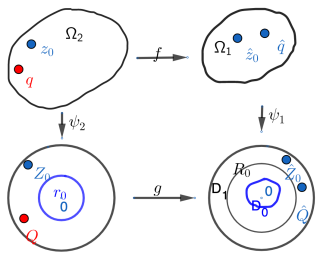}
		\caption{The conjugation.}
		\label{Figure5}
	\end{figure}
	Then there are four cases for distributing $\hat{Z}_0$ and $\hat{Q}.$
	
	Case 1: if $\hat{Z}_0, \hat{Q}\in \triangle(0, R_0),$ choose a disk  $\triangle(0, r_1)=\{z: \lvert z\rvert\leq r_1<1\}$ such that $g^{-1}(\triangle(0, R_0))\subseteq \triangle(0, r_1)$, then $Z_0\in\triangle(0, r_1)$ and there exists a point $Q=g^{-1}(\hat{Q})\in\triangle(0, r_1)$. Hence  $d_\triangle(Z_0, Q)\leq C'$ for some uniform constant $C'.$

	Case 2: if $\hat{Z}_0\in \triangle(0, R_0), \hat{Q}\in D_1,$ then there exists a disk $\triangle(0, R_1)=\{z: \lvert z\rvert \leq R_1<1\}$ including $\hat{Q}$ and $\hat{Z}_0$ since $d_\triangle(\hat{Z}_0, \hat{Q})\leq C.$ Hence there is  a disk $\triangle(0, r_2)=\{z: \lvert z\rvert\leq r_2<1\}$ so that $g^{-1}(\triangle(0, R_1))\subseteq\triangle(0, r_2)$. Then letting $Q$ be any point of $g^{-1}(\hat{Q})$ inside $\triangle(0, r_2).$ Hence $d_\triangle(Z_0, Q)\leq C''$ for some uniform constant $C''$.

	Case 3: if $\hat{Z}_0\in D_1, \hat{Q}\in \triangle(0, R_0),$ this situation is the same as case 2. 
	
	Case 4: if $\hat{Z}_0, \hat{Q}\in D_1,$ then $g^{-1}(z)$ is locally holomorphic from a subset of $D_1$ to $\triangle.$ We can choose the branch of the inverse map of $g$ such that $Z_0=g^{-1}(\hat{Z}_0)\in\triangle\setminus\triangle(0, r_0)$, then there exists a point $Q:=g^{-1}(\hat{Q})\in\triangle\setminus\triangle(0, r_0).$ Next, we need to show that the Kobayashi distance $d_{\triangle}(Z_0, Q)$ is still uniformly bounded. The way to prove this is similar to the proof of Theorem \ref{theo2}.
	
	If $r_0\leq\lvert Z_0\rvert\leq\rho<1, r_0\leq\lvert Q\rvert \leq\rho<1$, then calculating similarly to equation (\ref{eq8}), we conclude that there is a constant $C'$ such that $d_{\triangle}(Z_0, Q)\leq C'$. But if either $\lvert Z_0\rvert>\rho$ or $\lvert Q\rvert>\rho$, 
	we can prove that $d_{\triangle}(Z_0, Q)$ is still uniformly bounded as follows.
	
	We choose a sector to be $D_1^+$ inside $D_1$ such that $\hat{Z}_0, \hat{Q}\in D_1^+$, but $\hat{Z}_0, \hat{Q}$ are neither on $\partial D_1^+$ or very close to $\partial D_1^+$. The way to choose $D_1^+$ is the same as choosing $\mathbb A^+$ in the proof of Theorem \ref{theo2}.  
	Then we have $$d_{\triangle}(Z, Q)\leq d_{D_1^+}(\hat{Z}_0, \hat{Q})$$ since $g^{-1}(z): D_1^+\rightarrow\triangle$ is holomorphic.

	Now, we need to show that $d_{D_1^+}(\hat{Z}_0, \hat{Q})$ is bounded by some constant. In Wold's paper \cite{RefW} (Theorem 3.4), he proved that 
	$F_{D_1^+}(z, \xi_1)-F_{\triangle}(z, \xi_2)=O(\delta(z))$, where $\delta$ denotes the boundary distance.  
	Then 
	\begin{equation*}
		\begin{aligned}
			d_{D_1^+}(Z_0, Q)&\leq d_{\triangle}(Z_0, Q)+d_E(\gamma(t), \partial\triangle)\leq C+\int_{0}^{1}O(\lvert\gamma(t)-\frac{\gamma(t)}{\lvert\gamma(t)\rvert}\rvert)\gamma'(t)dt\\
			&\leq C+\int_{0}^{1}\lvert\gamma'(t)\rvert dt\leq C+\lvert Z_0-Q\rvert< C+2.
		\end{aligned}
	\end{equation*}
	
	Hence there exists a constant $C':=C+2$ such that $d_{\triangle}(\hat{Z}, \hat{Q})\leq d_{D_2}(Z_0, Q)<C'.$

	Therefore, this theorem is true for all these four cases, i.e., there is a constant $\tilde{C}$ so that there exists a point $q=f^{-1}(\hat{q})\in\Omega_2$ such that  $d_{\Omega_2}(z_0, q)\leq \tilde{C}.$

	Let us continuously consider the orbit between more connected components for the situation (2).
	Suppose the starting point $z_0$ is inside some connected component $\Omega_{i_1}, i_1= 3, 4, 5,\cdots$.
	Then there is a positive integer $N_0$ such that $\hat{z}_0:=f^{N_0}(z_0)\in\Omega_{1}$. Note that if $N_0=1,$ it is the same as the orbit between two connected components, so the rest is to consider when $N_0\geq 2.$
	By Theorem \ref{theom1}, we  have that there is a constant $C$ such that for every point $\hat{z}_0\in \Omega_{1}$, there exists a point $\hat{q}\in \cup_k f^{-k}(p), k\geq0$ in $\Omega_1$ so that $d_{\Omega_{1}}(\hat{z}_0, \hat{q})\leq C.$ Then we only need to show that there is a point $q\in\Omega_{i_1}$ such that $d_{\Omega_{i_1}}(z_0, q)$ is uniformly bounded. When finding $q$, which is some point of iterating the inverse of $\hat{q}$, we need to be careful in dealing with the critical points when it appears in the inverse orbit.
	
	We know that $f$ has finitely many critical points, so there are only finitely many $\Omega_i$ containing critical points. Let $\Omega_{i_2}$ be a connected component satisfies $f(\Omega_{i_2})=\Omega_{1}$ and $f^{N_0-1}(\Omega_{i_1})=\Omega_{i_2}$. If there are some critical points in $\Omega_{{i_2}}$, then we do the same procedure as above to find that there is a point $f^{-1}(\hat{q})$ such that $d_{\Omega_{i_2}}(f^{-1}(\hat{z}_0), f^{-1}(\hat{q}))$ is uniformly bounded. If there are no critical points inside $\Omega_{{i_2}}$, then the Kobayashi metric is an isometry. Inductively, after $N_0$ times of iterating the inverse orbit of $\hat{q}$, we definitely can find a point $q:=f^{-N_0}(\hat{q})\in\Omega_{i_1}$ such that $d_{\Omega_{i_1}}(z_0, q)$ is uniformly bounded. 
	
	Therefore, no matter how many connected components of $\mathcal{A}(p)$ has, there is a constant $\tilde{C}$ so that for every point $z_0$ inside any $\Omega_i,$ there exists a point $q\in \cup_k f^{-k}(p)$ inside $\Omega_i$ such that $d_{\Omega_i}(z_0, q)\leq \tilde{C}$, where $d_{\Omega_i}$ is the Kobayashi distance on $\Omega_i.$  
	
\end{proof}

\bmhead{Acknowledgments}
The author is very grateful to her advisor, Professor John Erik Forn\ae ss, for suggesting this research problem and his patient guidance and valuable comments. In addition, the author thanks the University of Parma for supporting her doctoral study. She also appreciates the staff of NTNU in Norway for enabling her to visit the Department of Mathematical Sciences so that this research works well.

\end{document}